\newcommand{\CC}{\mathscr{C}}
\newcommand{\N}{\mathbb{N}}
\newcommand{\R}{\mathbb{R}}
\newcommand{\PV}{\mbox{\normalfont P.V.}}
\def\XXint#1#2#3{{\setbox0=\hbox{$#1{#2#3}{\int}$ }
\vcenter{\hbox{$#2#3$ }}\kern-.6\wd0}}
\theoremstyle{plain}
\newtheorem{definition}{Definition}[]
\newtheorem{theorem}[definition]{Theorem}
\newtheorem{proposition}[definition]{Proposition}
\theoremstyle{definition}
\newtheorem{remark}[definition]{Remark}
\renewcommand{\le}{\leqslant}
\renewcommand{\ge}{\geqslant}
\begin{document}

\title[On the growth of nonlocal catenoids]{On the growth of nonlocal catenoids}

\thanks{Both authors are members of INdAM/GNAMPA.
The first author was also a member of the Barcelona Graduate School of Mathematics. His research is supported by a Royal Society Newton International Fellowship and by
the MINECO grants MDM-2014-0445, MTM2014-52402-C3-1-P, and MTM2017-84214-C2-1-P.
The last author is supported by
the Australian Research Council Discovery Project 170104880 NEW ``Nonlocal
Equations at Work''. }

\author{Matteo Cozzi}
\author{Enrico Valdinoci}

\address[Matteo Cozzi]{
Department of Mathematical Sciences
\newline\indent University of Bath \newline\indent
Claverton Down, Bath BA2 7AY, 
United Kingdom}
\email{m.cozzi@bath.ac.uk}

\address[Enrico Valdinoci]{Department of Mathematics and Statistics
\newline\indent University of Western Australia \newline\indent
35 Stirling Highway, WA 6009 Crawley, Australia}
\email{enrico.valdinoci@uwa.edu.au}

\begin{abstract}
As well known, classical catenoids in~$\R^3$ possess logarithmic growth at infinity.
In this note we prove that the case of nonlocal minimal
surfaces is significantly different, and indeed all nonlocal catenoids
must grow at least linearly. More generally,
we prove that stationary sets for the nonlocal perimeter functional
which grow sublinearly at infinity are necessarily half-spaces.
\end{abstract}

\maketitle

\section{Introduction}

\noindent
The recent literature has taken into account
a number of important problems related to energies of
nonlocal type. One of the most challenging topics in this context
is given by the so-called ``nonlocal minimal surfaces'',
as introduced by~\cite{CRS10}. These objects are boundaries of sets
that minimize a nonlocal perimeter functional,
which takes into account all the interactions---weighted by
a rotation and translation invariant homogeneous kernel with
polynomial decay---between the points of a given set and the points
of its complement.

More precisely, for~$\alpha\in(0,1)$,
the kernel interaction between two disjoint measurable
sets~$E, F\subseteq \R^{n + 1}$
is given by
$$ I_\alpha(E,F):=\alpha(1-\alpha)\,\iint_{E\times F} \frac{dx\,dy}{|x-y|^{n + 1+\alpha}},
$$
and the $\alpha$-perimeter of~$E$ in a given domain~$\Omega$ is defined as
\begin{equation}\label{PER} {\rm Per}_\alpha(E,\Omega):=I_\alpha(E\cap\Omega,\Omega \setminus E)+
I_\alpha(E\cap\Omega, \R^{n + 1} \setminus (E \cup \Omega))+
I_\alpha(E\setminus\Omega,\Omega \setminus E),\end{equation}
which takes into account all the interactions of the set~$E$
with its complement, where at least one of the two interacting points
lies in the domain~$\Omega$.

The minimizers of~\eqref{PER} are
often called $\alpha$-minimal sets in~$\Omega$, and their boundaries
nonlocal, or fractional, $\alpha$-minimal surfaces in~$\Omega$.

If~$E$ is a minimizer of~\eqref{PER} in any given ball of~$\R^{n + 1}$,
we say that~$E$ is an~$\alpha$-minimal set in~$\R^{n + 1}$. 

We also consider here
a notion of $\alpha$-stationarity related to the fractional perimeter which is
very general, and basically relies on a definition ``in the viscosity
sense'' (in particular, the sets under consideration
are only assumed to be measurable, and no smoothness
or finite perimeter requirement is necessary in the framework that we adopt).
Namely, we say that~$E$ is
$\alpha$-stationary in a domain~$\Omega$ of~$\R^{n + 1}$
if for every~$x\in\partial E\cap\Omega$ the following conditions hold:
\begin{itemize}[leftmargin=*]
\item{\bf[Touching from the interior]}
for any set~$F$ with $C^2$ boundary in a neighborhood of~$x$
such that~$F \subseteq E$ and~$x \in \partial E \cap \partial F$,
we have that~$H_\alpha[F](x) \ge 0$;
\item{\bf[Touching from the exterior]}
for any set~$F$ with $C^2$ boundary in a neighborhood of~$x$
such that~$F \supseteq E$ and~$x \in \partial E \cap \partial F$,
we have that~$H_\alpha[F](x) \le 0$. 
\end{itemize}
In the setting above, the notation~$H_\alpha[F](x)$ stands for the nonlocal
mean curvature at a point~$x\in\partial F$, defined as
$$
H_\alpha[F](x) = \PV \int_{\R^{n + 1}} \frac{\chi_{\R^{n + 1} \setminus F}(y)
- \chi_F(y)}{|x - y|^{n + 1 + \alpha}} \,dy,$$
where, as usual, ``$\PV$'' denotes the Principal Value in the sense of Cauchy.

We also remark that if a set~$E$ is touched from the inside
at~$x\in\partial E$ by a~$C^2$ set~$F$, then the nonlocal mean
curvature of~$E$ is well defined and belongs to~$\R \cup \{ -\infty \}$.
Similarly, if a set~$E$ is touched from the outside
at~$x\in\partial E$ by a~$C^2$ set~$F$, then the nonlocal mean
curvature of~$E$ is well defined in~$\R \cup \{ +\infty \}$.
In particular, if~$E$ is $\alpha$-stationary its nonlocal mean curvature
is well defined and finite at points which can be touched from either inside or outside.

Notice that sets with~$C^2$ boundary in a neighborhood
of~$x$ for which~$H_\alpha[F](x)=0$ are~$\alpha$-stationary 
at~$x$ in the viscosity sense made precise above. 
Also, local minimizers
of the~$\alpha$-perimeter 
are~$\alpha$-stationary (see~\cite[Theorem~5.1]{CRS10}).
\medskip

Though the nonlocal perimeter recovers the classical perimeter,
as well as the nonlocal minimal surfaces recover the classical
minimal surfaces, up to normalizing constants
and in an appropriate limit sense as~$\alpha\nearrow1$, see~\cite{BBM01, D02, ADM11, CV13},
the nonlocal setting provides a number of 
extremely difficult problems
and several striking differences with respect to the classical case.
In particular, as~$\alpha\searrow0$, the functional in~\eqref{PER}
is related to a convex combination of Lebesgue measures,
with coefficients taking into account the behavior of the set at infinity,
see~\cite{MS02, DFPV13}. The lack of regularity properties
for this limit functional, combined with the predominant effects
of the energy contributions at infinity, produce serious
difficulties in the development of the regularity theory in dimension higher than two
and a series of new boundary stickiness effects, see~\cite{DSV17, BLV18}.
See also~\cite{CF17, DV18} for recent reviews on these and on related topics.
\medskip

One line of research related to nonlocal minimal surfaces consists
in detecting nonlocal counterparts of classical objects, such
as nonlocal catenoids,
see~\cite{DDPW14}, nonlocal helicoids, see~\cite{CDD16},
surfaces of constant nonlocal mean curvature, see~\cite{DDDV16, CFSW, CFW18a, CFW18b}.
\medskip

Interestingly, some of these nonlocal objects, such as the (double) helicoid, show exactly the same behavior of their classical counterparts, and others, such as the surfaces with constant nonlocal curvature, can be obtained by using delicate bifurcation methods from the corresponding classical surfaces. On the other hand, some other nonlocal surfaces, such as the nonlocal catenoids, exhibit important structural differences with respect to their classical analogues. As a matter of fact (see Figure~1 on page~115 of~\cite{DDPW14}) the nonlocal catenoid constructed in Theorem~1 of~\cite{DDPW14} can be seen as the boundary of an~$\alpha$-stationary set which possesses linear growth at infinity. More precisely,
such a two-dimensional surface can be described
for large~$r := \sqrt{ x^2_1 + x_2^2}$
as a two-leaves graph of the form~$|x_3| =\varphi(r)$, with
\begin{equation}\label{LINE}
\lim_{r\to+\infty}\frac{\varphi(r)}{r}=c\sqrt{1-\alpha},
\end{equation}
for some~$c>0$.

We stress that~\eqref{LINE} reveals an important structural
difference with respect to the local case, since the classical catenoid possesses logarithmic, rather than linear, growth at infinity. Therefore,
a natural question arising from these considerations
is whether or not one can construct nontrivial $\alpha$-stationary sets with
sublinear growth at infinity.\medskip

The main result of this paper is that the answer to this question is negative, namely a sublinear growth at infinity necessarily imposes that the $\alpha$-stationary set
is a half-space (differently from what happens in the local case).
More precisely, we prove the following result:

\begin{theorem} \label{mainthm}
Let~$n \ge 1$ be an integer and~$\alpha \in (0, 1)$. If~$E$ is an~$\alpha$-stationary set in~$\R^{n + 1}$ satisfying
\begin{equation} \label{bdEsublin}
\partial E \subseteq \Big\{ x = (x', x_{n + 1}) \in \R^{n + 1} : |x_{n + 1}| < \varphi(|x'|) \Big\},
\end{equation}
for some continuous function~$\varphi: [0, +\infty) \to (0, +\infty)$ such that
\begin{equation} \label{phisublin}
\lim_{r \rightarrow +\infty} \frac{\varphi(r)}{r} = 0,
\end{equation}
then~$E$ is a half-space.
\end{theorem}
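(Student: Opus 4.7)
My plan is to combine a sliding-barrier method with the strong maximum principle for nonlocal mean curvature. The first step is a dichotomy based on the shape of $E$ outside the strip: since the two open connected sets $U^{\pm} := \{\pm x_{n+1} > \varphi(|x'|)\}$ are disjoint from $\partial E$, each is entirely contained in $E$ or in $E^c$. Up to replacing $E$ by $E^c$ and possibly reflecting across $\{x_{n+1} = 0\}$, I may assume $U^+ \subseteq E$. The ``trapped'' configurations $U^+ \cup U^- \subseteq E$ (whence $E^c$ would lie entirely in the strip) and $\subseteq E^c$ must be ruled out by a separate argument: for instance, a rescaling shows that $R^{-1} E^c$ would shrink to the hyperplane $\{x_{n+1} = 0\}$ as $R \to \infty$, forcing $E^c$ to have vanishing Lebesgue density at infinity, which is incompatible with density estimates for stationary nonlocal surfaces, or equivalently, one can fit a large convex smooth barrier around the trapped set whose nonlocal mean curvature has the wrong sign at first contact, contradicting the $\alpha$-stationarity condition. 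Hence $U^- \subseteq E^c$ as well.

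Next, I slide the horizontal half-space $H_t := \{x_{n+1} > t\}$ from above. When $\varphi$ is bounded, $H_t \subseteq U^+ \subseteq E$ for $t$ larger than $\sup \varphi$, so $t^* := \inf\{t : H_t \subseteq E\}$ is a finite real number. By continuity of the sliding one has $H_{t^*} \subseteq E$ and $\partial H_{t^*}$ must meet $\partial E$ at some $x_*$, otherwise $t^*$ could be decreased. The half-space $H_{t^*}$ is a $C^2$ set with $H_\alpha[H_{t^*}] \equiv 0$ touching $E$ from the inside. The strong maximum principle for the nonlocal mean curvature operator, which in our setting follows from the pointwise identity
\[
H_\alpha[H_{t^*}](x_*) - H_\alpha[E](x_*) \;=\; 2 \int_{\R^{n + 1}} \frac{\chi_{E \setminus H_{t^*}}(y)}{|x_* - y|^{n + 1 + \alpha}} \, dy \;\ge\; 0,
\]
combined with the $\alpha$-stationarity of $E$ at $x_*$ (which makes $H_\alpha[E](x_*)$ well-defined and consistent with the above), forces $|E \setminus H_{t^*}| = 0$. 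Together with the inclusion $H_{t^*} \subseteq E$, this gives that $E$ coincides with the half-space $H_{t^*}$ up to a Lebesgue null set.

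The main technical obstacle is the case of unbounded $\varphi$, in which no horizontal half-space fits inside $U^+$ in a single step. I would replace the horizontal half-space by a family of smooth upward-opening paraboloid regions $P_{a, t} := \{x_{n+1} > a|x'|^2 + t\}$ with $a > 0$ small, which \emph{do} fit inside $U^+$ for $t$ sufficiently large because $\varphi(r) = o(r) = o(r^2)$. Sliding $t$ downwards to first contact gives a point $x_*^a \in \partial E$ at which $\alpha$-stationarity controls $H_\alpha[P_{a, t_a^*}](x_*^a)$. The sublinearity hypothesis $\varphi(r)/r \to 0$ would then be used quantitatively to ensure that $x_*^a$ remains in a compact region as $a \to 0^+$, which is the delicate point; once this compactness is in hand, one passes to the limit to produce a genuine half-space touching $\partial E$ at some point, and the maximum-principle argument of the previous paragraph concludes the proof.
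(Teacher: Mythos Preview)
Your dichotomy into ``trapped'' (catenoid-type) and ``graph-type'' configurations matches the paper's split exactly. The two halves of your plan, however, diverge in quality.

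\textbf{Catenoid case.} Your two suggestions are both problematic. Density estimates in the literature are proved for \emph{minimizers} via energy comparison; they are not known for viscosity $\alpha$-stationary sets, which is the generality of the theorem. The barrier idea is the right one, but ``a large convex smooth barrier'' is too vague: the paper builds a very specific family $F_\varepsilon$ (Proposition~\ref{barrierprop}) that is a slab of thickness $2\varepsilon$ over $B_1'$ and opens into the cone $\{|x_{n+1}|<\varepsilon|x'|\}$ at infinity. Two features of this barrier are essential and not captured by a generic convex set: (i) the strict inequality $H_\alpha[F_\varepsilon]>0$ everywhere on $\partial F_\varepsilon$, which for the conical ends relies on the explicit computation $H_\alpha[\CC_\varepsilon]=M(\varepsilon)|x|^{-\alpha}$ with $M(\varepsilon)\to+\infty$ from~\cite{DDPW14}; and (ii) the conical growth of $\partial F_\varepsilon$, which is exactly what forces the touching point to stay in a compact set, since a sequence escaping to infinity would yield $\varphi(r_j)/r_j\ge \varepsilon_\star - o(1)$, contradicting sublinearity. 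A slab or a convex body without linear opening does not give this compactness.

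\textbf{Graph case.} Here your approach is genuinely different from the paper's and has a real gap. The paper does \emph{not} slide barriers at all: it blows down, observes that $E_R=E/R$ is $\varepsilon$-flat in $B_1$ for $R$ large, and then invokes the improvement-of-flatness theorem of~\cite[Theorem~6.1]{CRS10} (valid for viscosity stationary sets) to get a uniform $C^{1,\beta}$ bound on the graphs $u_R$; scaling back gives $[\nabla u]_{C^\beta(B'_{R/4})}\le C R^{-\beta}\to 0$, so $\nabla u$ is constant. Your sliding argument, by contrast, hinges on the existence of a finite touching point, and this fails already in the bounded-$\varphi$ case: the assertion ``otherwise $t^*$ could be decreased'' is not justified, because on an unbounded hyperplane $\partial H_{t^*}$ the approach of $\partial E$ to $\partial H_{t^*}$ can occur entirely at infinity (think of $\partial E$ asymptotic to $\{x_{n+1}=t^*\}$ without meeting it). Replacing half-spaces by paraboloids $P_{a,t}$ does not cure this: as $a\to 0^+$ the first-touch parameters $t_a^*$ and the contact points $x_*^a$ have no a priori bound coming from $\varphi(r)=o(r)$ alone, so the limiting half-space need not touch $\partial E$ anywhere. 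You flag this as ``the delicate point'' but offer no mechanism; in fact obtaining such compactness without some regularity input (which is precisely what the paper imports from~\cite{CRS10}) seems out of reach. Once a genuine touching point exists your strong-maximum-principle computation is correct, but getting that point is the whole difficulty.
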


To appreciate the optimality of the sublinear condition in Theorem~\ref{mainthm},
one can compare~\eqref{phisublin} with~\eqref{LINE}.

Notice that, when~$\partial E$ is also assumed to be a graph over the whole horizontal hyperplane, stronger results are available. In~\cite[Theorem~1.6]{CC17}, the flatness of~$\partial E$ is deduced allowing~$\varphi$ to have linear growth at infinity, with any slope. Subsequently, this result has been improved in~\cite[Theorem~1.3]{CFL18}, requiring only a one-sided bound on~$\partial E$---i.e., that only one of the inclusions of the forthcoming~\eqref{Egraph} is satisfied, with~$\varphi$ linear at infinity. See also~\cite{FV17,FarV17}, and again~\cite{CFL18} for related rigidity results of Bernstein and Moser type.

\section{Proof of Theorem~\ref{mainthm}}

\noindent
In light of assumption~\eqref{bdEsublin}, only two situations are possible, up to exchanging~$E$ with its complement: either~$\partial E$ is of \emph{catenoid-type}, meaning that
\begin{equation} \label{Ecatenoid}
E \subseteq \Big\{ x \in \R^{n + 1} : |x_{n + 1}| < \varphi(|x'|) \Big\},
\end{equation}
or~$\partial E$ is of \emph{graph-type}, i.e., it holds
\begin{equation} \label{Egraph}
\Big\{ x \in \R^{n + 1} : x_{n + 1} < - \varphi(|x'|) \Big\} \subseteq E \subseteq \Big\{ x \in \R^{n + 1} : x_{n + 1} < \varphi(|x'|) \Big\}.
\end{equation}
We deal with these two cases separately, each in one of the two subsequences that compose the remainder of the present section. More specifically, Theorem~\ref{mainthm} will be a consequence of the combination of the forthcoming Theorems~\ref{mainforcatenoids} and~\ref{mainforgraphs}.

\subsection{Catenoid-type~$\alpha$-stationary surfaces}

Here, we prove Theorem~\ref{mainthm} under the assumption that~\eqref{Ecatenoid} holds true. More precisely, we establish the following result.

\begin{theorem} \label{mainforcatenoids}
There exists no nontrivial~$\alpha$-stationary set~$E$ in~$\R^{n + 1}$ such that~\eqref{Ecatenoid} holds true for some continuous function~$\varphi: [0, +\infty) \to (0, +\infty)$ satisfying~\eqref{phisublin}.
\end{theorem}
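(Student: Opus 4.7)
The plan is to argue by contradiction via a sliding touching argument with a cone-type exterior barrier. Assume $E \subseteq T := \{|x_{n+1}| < \varphi(|x'|)\}$ is a nontrivial $\alpha$-stationary set, and for $\beta > 0$ and $t \in \R$ consider the double cones
\[
F_{\beta,t} := \bigl\{x \in \R^{n + 1} : |x_{n + 1}| < \beta |x'| + t\bigr\}.
\]
Sublinearity of $\varphi$ ensures $M(\beta) := \sup_{r \ge 0}(\varphi(r) - \beta r)$ is finite, hence $F_{\beta,t} \supseteq T \supseteq E$ once $t \ge M(\beta)$. Decreasing $t$ down to the critical shift
\[
t^\ast(\beta) := \sup_{x \in E}\bigl(|x_{n + 1}| - \beta |x'|\bigr),
\]
a coercivity estimate (using that $|x_{n + 1}| - \beta|x'| \le \varphi(|x'|) - \beta|x'| \to -\infty$ as $|x'| \to +\infty$) shows $t^\ast(\beta)$ is finite and attained at some $x_0 = x_0(\beta) \in \partial E \cap \partial F_{\beta, t^\ast}$. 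After a small horizontal translation of the barrier (if necessary) to ensure $x_0' \ne 0$, the boundary $\partial F_{\beta, t^\ast}$ is $C^\infty$ near $x_0$, and the exterior stationarity condition for $E$ yields $H_\alpha[F_{\beta, t^\ast}](x_0) \le 0$.

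The crux is to prove the strict opposite inequality for $\beta$ small enough. Scale invariance of the pure cone $C_\beta := F_{\beta, 0}$ gives $H_\alpha[C_\beta](x) = \kappa(\beta)\,|x|^{-\alpha}$ at every $x \in \partial C_\beta$, with $\kappa(\beta) > 0$ precisely when $\beta$ lies below the asymptotic slope $\beta^\ast = c\sqrt{1 - \alpha}$ of the DDPW catenoid (the catenoid's $\alpha$-stationarity forces its asymptotic tangent cone to satisfy $H_\alpha = 0$); moreover $\kappa(\beta) \to +\infty$ as $\beta \to 0^+$, since $C_\beta$ then becomes arbitrarily thin and behaves like a slab near the axis. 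To pass from $C_\beta$ to the shifted barrier $F_{\beta, t^\ast}$ at the touching point, I would split the nonlocal mean curvature integral into a region close to the upper boundary sheet (where $F_{\beta, t^\ast}$ and a suitable vertical translate of $C_\beta$ coincide) and a remainder supported in the strip of thickness $2 t^\ast$ between the shifted and unshifted lower sheets; a direct estimate bounds this remainder by a quantity of order $t^\ast(\beta) \big/ \bigl(\beta^{n + 1 + \alpha}\, |x_0'|^{1 + \alpha}\bigr)$. Sublinearity of $\varphi$ further forces the argmax of $\varphi(r) - \beta r$ to diverge as $\beta \to 0^+$, so $|x_0'(\beta)| \to +\infty$ and the remainder becomes subdominant to $\kappa(\beta)\,|x_0|^{-\alpha}$; for $\beta$ small enough one therefore gets $H_\alpha[F_{\beta, t^\ast}](x_0) > 0$, contradicting stationarity.

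The principal difficulty lies in guaranteeing $|x_0'(\beta)| \to +\infty$ as $\beta \to 0^+$: this is a global statement about where $E$ realizes the critical shift $t^\ast(\beta)$ and does not follow from the cone geometry alone; it must be coupled to the nontriviality of $E$ and the precise sublinear decay of $\varphi$. When $\varphi$ is bounded this issue evaporates, since the flat slab $\{|x_{n + 1}| < \sup_E |x_{n + 1}|\}$ is already an exterior barrier with $H_\alpha > 0$ by symmetrization (with the supremum made attainable via a translation-compactness procedure exploiting the stability of $\alpha$-stationarity under $L^1_{\loc}$ limits). If the quantitative comparison between $F_{\beta, t^\ast}$ and $C_\beta$ proves too delicate in the general unbounded case, a fallback is to replace the linear cone by vertical translates of the DDPW catenoid itself and combine the sliding argument with a strong maximum principle for $\alpha$-stationary sets, which would force $E$ to coincide with a translate of the DDPW catenoid and immediately contradict the sublinear growth of $\partial E$.
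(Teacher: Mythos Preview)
Your overall strategy---slide a cone-type barrier onto $E$ until it touches $\partial E$, then contradict the viscosity inequality via the sign of $H_\alpha$---is precisely the paper's. The gap lies in your choice of barrier. With the shifted cone $F_{\beta,t}=\{|x_{n+1}|<\beta|x'|+t\}$ you are forced to compare $H_\alpha[F_{\beta,t^\ast}](x_0)$ to $H_\alpha$ of the pure cone, and the correction coming from the strip of width $2t^\ast$ between the lower sheets is of order $t^\ast(\beta)\,(\beta|x_0'|)^{-1-\alpha}$. For this to be dominated by $\kappa(\beta)|x_0|^{-\alpha}$ you need a quantitative lower bound on $|x_0'(\beta)|$ in terms of $t^\ast(\beta)$, $\beta$, and $\kappa(\beta)$ simultaneously, and you do not establish one. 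The sentence ``sublinearity of $\varphi$ forces the argmax of $\varphi(r)-\beta r$ to diverge, so $|x_0'(\beta)|\to+\infty$'' conflates the argmax over the enclosing region with the touching point on $\partial E$; the latter is determined by $E$, and nothing prevents it from staying bounded even when $\varphi$ is unbounded. Your slab fallback handles the case in which $\sup_E|x_{n+1}|$ is finite and attained, but the DDPW-catenoid fallback is only known to exist for $n=2$ and $\alpha$ close to $1$, so the general case remains open in your argument. (The horizontal translation to avoid the tip is also delicate: after translating you must re-run the sliding, and the new touching point may again sit at the tip.)

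The paper sidesteps all of this by replacing the shifted cone with a \emph{rounded} cone $F_\varepsilon=\{|x_{n+1}|<v_\varepsilon(x')\}$, where $v_\varepsilon\equiv\varepsilon$ on $B_1'$, $v_\varepsilon(x')=\varepsilon|x'|$ outside $B_2'$, and there is a smooth interpolation in between. The crucial gain is that $H_\alpha[F_\varepsilon]>0$ at \emph{every} boundary point once $\varepsilon$ is small: on the conical part this follows from $H_\alpha[\CC_\varepsilon]=M(\varepsilon)|x|^{-\alpha}$ with $M(\varepsilon)\to+\infty$, plus a correction of size at most $C|x|^{-\alpha}$ with $C$ independent of $\varepsilon$; on the cap a direct computation exploits the slab-like geometry to give $H_\alpha\gtrsim\varepsilon^{-\alpha}$. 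After a single dilation of $E$ so that it sits inside $F_{\varepsilon_0/2}$, one slides in the parameter $\varepsilon$; sublinearity of $\varphi$ is used only to show that any sequence of near-touching points stays bounded (hence converges to a genuine contact point), and the contradiction then follows regardless of where that point lies. The moral: instead of trying to control the touching location, build the barrier so that the curvature sign is favorable everywhere.
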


Theorem~\ref{mainforcatenoids} is proved via a touching argument based on the maximum principle for~$\alpha$-stationary sets, used with the barrier constructed in the next proposition.

\begin{proposition} \label{barrierprop}
There exists a small~$\varepsilon_0 > 0$ such that, for every~$\varepsilon \in (0, \varepsilon_0]$, there exists a set~$F_\varepsilon$ of the form
\begin{equation} \label{Fepsdef}
F_\varepsilon := \Big\{ x \in \R^{n + 1} : |x_{n + 1}| < v_\varepsilon(x') \Big\},
\end{equation}
for some smooth, radially non-decreasing function~$v_\varepsilon: \R^n \to [\varepsilon, +\infty)$ satisfying
\begin{equation} \label{vepsbounds}
\begin{cases}
v_\varepsilon(x') = \varepsilon & \quad \mbox{for } x' \in B'_1 \\
\varepsilon \le v_\varepsilon(x') \le \varepsilon |x'| & \quad \mbox{for } x' \in B'_2 \setminus B'_1 \\
v_\varepsilon(x') = \varepsilon |x'| & \quad \mbox{for } x' \in \R^n \setminus B'_2,
\end{cases}
\end{equation}
and~$v_\varepsilon \le v_{\varepsilon'}$ in~$\R^n$ for every~$0 < \varepsilon \le \varepsilon' \le \varepsilon_0$, such that
\begin{equation} \label{HFeps>0}
H_\alpha[F_\varepsilon](x) > 0 \quad \mbox{for every } x \in \partial F_\varepsilon.
\end{equation}
\end{proposition}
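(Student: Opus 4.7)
\emph{Plan.} My plan is to take $v_\varepsilon$ to be an explicit rescaling of a fixed smooth profile, and to verify~\eqref{HFeps>0} by comparing with the tangent half-space at each boundary point. For the construction, I would fix once and for all a smooth non-decreasing $\psi \in C^\infty([0,\infty))$ with $\psi \equiv 1$ on $[0,1]$, $\psi(r) = r$ on $[2,\infty)$, and $1 \le \psi(r) \le r$ on $[1,2]$, obtained by gluing the two affine pieces through a standard mollifier. Setting $v_\varepsilon(x') := \varepsilon\, \psi(|x'|)$ then yields a smooth radial function on $\R^n$ (smoothness across the origin being ensured by $\psi$ being constant near $0$), radially non-decreasing, and satisfying all bounds in~\eqref{vepsbounds} by construction; the monotonicity $v_\varepsilon \le v_{\varepsilon'}$ for $\varepsilon \le \varepsilon'$ is immediate. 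Three structural properties of $v_\varepsilon$ will drive the curvature estimate: $|\nabla v_\varepsilon| \le \varepsilon$, the convexity of $v_\varepsilon$ on $\R^n$ (since $\psi$ is convex and non-decreasing), and the Hessian bound $|\nabla^2 v_\varepsilon(x')| \le C \varepsilon / \max(1, |x'|)$.

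\emph{Positivity of $H_\alpha[F_\varepsilon]$.} By axisymmetry and the reflection $x_{n+1} \leftrightarrow -x_{n+1}$, it suffices to consider boundary points of the form $x_0 = (R e_1, v_\varepsilon(R))$ with $R := |x_0'| \geq 0$. Let $\Pi_{x_0}$ be the tangent half-space of $F_\varepsilon$ at $x_0$. Since $H_\alpha[\Pi_{x_0}](x_0) = 0$ by reflection symmetry, subtracting gives
\begin{equation*}
H_\alpha[F_\varepsilon](x_0) = 2 \int_{\R^{n+1}} \frac{\chi_{\Pi_{x_0} \setminus F_\varepsilon}(y) - \chi_{F_\varepsilon \setminus \Pi_{x_0}}(y)}{|x_0 - y|^{n + 1 + \alpha}}\, dy =: 2(A^+ - A^-).
\end{equation*}
Convexity of $v_\varepsilon$ ensures that the upper sheet of $\partial F_\varepsilon$ lies above its tangent hyperplane $\ell$ at $x_0$, whence $F_\varepsilon \setminus \Pi_{x_0} \subseteq \{\ell(y') < y_{n+1} < v_\varepsilon(|y'|)\}$. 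Fubini then bounds $A^- \le \int_{\R^n} h(y')\, |y' - x_0'|^{-(n+1+\alpha)}\, dy'$, where $h(y') := v_\varepsilon(|y'|) - \ell(y') \ge 0$. Combining the Hessian-type estimate $h(y') \le C \varepsilon\, |y' - x_0'|^2 / \max(1, R)$ near $x_0'$ with the global Lipschitz bound $h(y') \le C \varepsilon (|y'| + R + 1)$, and splitting the integral at $|y' - x_0'| \sim R$, one obtains $A^- \le C \varepsilon\, (1 + R)^{-\alpha}$.

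\emph{Lower bound on $A^+$ and conclusion.} The region $\Pi_{x_0} \setminus F_\varepsilon$ contains the ``deep below'' part $\{y_{n+1} \le -v_\varepsilon(|y'|)\} \cap \Pi_{x_0}$. For $R$ bounded, this is a fixed region of positive measure at positive distance from $x_0$, and direct integration gives a uniform positive lower bound. For $R \to \infty$, the dilation $y \mapsto R^{-1} y$ rescales the configuration to that of the pure cone $\{|y_{n+1}| < \varepsilon|y'|\}$, for which a direct integration (separating the contributions from the two sheets) again provides a uniform positive lower bound. Combining, $A^+ \ge c\, (1 + R)^{-\alpha}$ uniformly in $R$ and $\varepsilon$. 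Choosing $\varepsilon_0 := c/(2C)$ therefore yields $H_\alpha[F_\varepsilon](x_0) \ge (c - C\varepsilon)(1 + R)^{-\alpha} > 0$ for all $\varepsilon \le \varepsilon_0$ and all $x_0 \in \partial F_\varepsilon$.

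\emph{Main obstacle.} The most delicate part is the uniform-in-$R$ lower bound on $A^+$ across the three regimes---slab ($R \le 1$), transition ($1 \le R \le 2$), and cone ($R \ge 2$). In particular, for large $R$ the lower sheet $\{y_{n+1} = -v_\varepsilon(|y'|)\}$ is only approximately contained in $\Pi_{x_0}$ (because of the $\varepsilon$-tilt of the tangent plane), so the error from the mismatch near the tangent plane must be carefully tracked and shown to be of lower order than the main contribution.
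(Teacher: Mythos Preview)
Your construction of~$v_\varepsilon$ matches the paper's, but your curvature argument takes a genuinely different route. The paper splits~$\partial F_\varepsilon$ into a far region~$|x|\ge 4$, where it compares~$F_\varepsilon$ with the cone~$\CC_\varepsilon=\{|x_{n+1}|<\varepsilon|x'|\}$ and invokes the asymptotic~$H_\alpha[\CC_\varepsilon](x)=M(\varepsilon)|x|^{-\alpha}$ with~$M(\varepsilon)\to+\infty$ from~\cite{DDPW14}, and a near region~$|x|\le 4$, where it rewrites~$H_\alpha[F_\varepsilon]$ through the auxiliary function~$G(t)=\int_0^t(1+\tau^2)^{-(n+1+\alpha)/2}\,d\tau$ and estimates directly, splitting the~$y'$-integral at scale~$\varepsilon$. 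Your tangent-half-space comparison is more unified (a single decomposition for all~$R$) and self-contained, since it does not rely on the cone computation of~\cite{DDPW14}; the price is that the uniform-in-$R$ lower bound on~$A^+$ must be established across regimes by hand, which the paper sidesteps by outsourcing the large-$R$ case.

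One concrete error: your profile~$\psi$ \emph{cannot} be simultaneously smooth and convex with the stated endpoint behavior. Smoothness forces~$\psi'(1)=0$ and~$\psi'(2)=1$, while~$\int_1^2\psi'=\psi(2)-\psi(1)=1$; if~$\psi'$ is nondecreasing with~$\psi'(2)=1$ then~$\psi'\le 1$ on~$[1,2]$, and the integral can equal~$1$ only if~$\psi'\equiv 1$, contradicting~$\psi'(1)=0$. The same computation shows that~$|\psi'|\le 1$ (hence~$|\nabla v_\varepsilon|\le\varepsilon$) is impossible; only~$|\nabla v_\varepsilon|\le C\varepsilon$ holds. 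Fortunately this does not break your argument: the containment~$F_\varepsilon\setminus\Pi_{x_0}\subseteq\{\ell(y')<y_{n+1}<v_\varepsilon(y')\}$ is automatic from the definitions and needs no convexity, and after replacing~$h$ by~$h_+:=\max(h,0)$ in the Fubini bound your Hessian and Lipschitz estimates go through with~$C\varepsilon$ in place of~$\varepsilon$, which is all the final comparison~$A^+-A^-\ge(c-C\varepsilon)(1+R)^{-\alpha}$ requires.
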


\begin{remark}
Proposition~\ref{barrierprop} reveals a structural difference between
the classical and the nonlocal cases: for instance,
in dimension~$2$, the (mean) curvature of the set~$F_\varepsilon$
necessarily vanishes at points~$(x',x_2)\in\partial F_\varepsilon$
with~$x'\in\R\setminus (-2 ,2)$, and so the analogue of~\eqref{HFeps>0}
cannot hold in the classical case. In higher dimensions this difference
is even more contrasting, as the standard mean curvature of~$F_\varepsilon$ is actually negative at these points. For other barriers of purely nonlocal
character see also Proposition~7.3 in~\cite{DSV17},
Proposition~5 in~\cite{MR3778164},
and Proposition~3.1 and Lemma~8.1 in~\cite{CDNV}.
\end{remark}

\begin{proof}[Proof of Proposition~\ref{barrierprop}]
Let~$\eta \in C_c^\infty(\R^n)$ be a radially non-increasing cut-off function satisfying~$\eta = 1$ in~$B'_1$ and~$\eta = 0$ in~$\R^n \setminus B'_2$. We define
$$
v(x') := \eta(x') + (1 - \eta(x')) |x'|
$$
and, given~$\varepsilon > 0$,
$$
v_\varepsilon(x') := \varepsilon v(x'),
$$
for every~$x' \in \R^n$. It is immediate to check that~$v_\varepsilon$ is a smooth, radially non-decreasing function verifying~\eqref{vepsbounds} and
\begin{equation} \label{vepsreg}
\|\nabla v_\varepsilon\|_{L^\infty(\R^n)} + \| D^2 v_\varepsilon \|_{L^\infty(\R^n)} \le C \varepsilon, 
\end{equation}
for some dimensional constant~$C > 0$. We claim that the set~$F_\varepsilon$ defined by~\eqref{Fepsdef} satisfies~\eqref{HFeps>0}.

Let~$\CC_\varepsilon$ be the cone given by
$$
\CC_\varepsilon := \Big\{ x \in \R^{n + 1} : - \varepsilon |x'| < x_{n + 1} < \varepsilon |x'| \Big\}.
$$
In~\cite[Section~10]{DDPW14}, it is proved that
\begin{equation} \label{supercones}
H_\alpha[\CC_\varepsilon](x) = \frac{M(\varepsilon)}{|x|^\alpha} \quad \mbox{for every } x \in \partial \CC_\varepsilon \setminus \{ 0 \},
\end{equation}
for some continuous function~$M: (0, +\infty) \to \R$ such that
\begin{equation} \label{Mexplodes}
\lim_{\varepsilon \to 0^+} M(\varepsilon) = +\infty.
\end{equation}
In particular, there exists~$\bar{\varepsilon} \in (0, 1/2)$ such that~$M(\varepsilon) > 0$ for every~$\varepsilon \in (0, \bar{\varepsilon})$. In the following, we will always assume~$\varepsilon$ to lie in such interval.

Let~$x \in \partial F_\varepsilon \setminus B_4$. Since~$F_\varepsilon \setminus B_3 = \CC_\varepsilon \setminus B_3$, we have that~$x \in \partial \CC_\varepsilon$ and
\begin{align*}
\left| H_\alpha[F_\varepsilon](x) - H_\alpha[\CC_\varepsilon](x) \right| & = \left| \int_{B_3} \frac{\chi_{\R^{n + 1} \setminus F_\varepsilon}(y) - \chi_{F_\varepsilon}(y) - \chi_{\R^{n + 1} \setminus \CC_\varepsilon}(y) + \chi_{\CC_\varepsilon}(y)}{|x - y|^{n + 1 + \alpha}} \, dy \right| \\
& \le 2 \int_{B_3} \frac{dy}{|x - y|^{n + 1 + \alpha}} \le \frac{4^{n + 2 + \alpha} |B_3|}{|x|^{n + 1 + \alpha}} \le \frac{C}{|x|^\alpha}.
\end{align*}
From now on,~$C$ will indicate a constant larger than~$1$ that depends only on~$n$ and~$\alpha$, and whose value may possibly change from line to line. By the above estimate and~\eqref{supercones}, we deduce that
$$
H_\alpha[F_\varepsilon](x) \ge H_\alpha[\CC_\varepsilon](x) - \left| H_\alpha[F_\varepsilon](x) - H_\alpha[\CC_\varepsilon](x) \right| \ge \frac{M(\varepsilon) - C}{|x|^\alpha}.
$$
Hence, thanks to~\eqref{Mexplodes}, there exists~$\varepsilon_1 \in (0, 1)$ such that
\begin{equation} \label{claiminthelarge}
H_\alpha[F_\varepsilon](x) > 0 \quad \mbox{for every } x \in \partial F_\varepsilon \setminus B_4,
\end{equation}
provided~$\varepsilon \in (0, \varepsilon_1]$.

We now deal with the case~$x \in \partial F_\varepsilon \cap B_4$. Let~$x' \in B_4'$. Without loss of generality, we may assume that~$x_{n + 1} > 0$, i.e., that~$x_{n + 1} = v_\varepsilon(x')$. Through computations analogous to those made in~\cite[Lemma~4.1]{CFSW} for~$n = 1$, we write the~$\alpha$-mean curvature of~$F_\varepsilon$ at~$x$ as
\begin{align*}
H_\alpha[F_\varepsilon](x) & = 2 \, \PV \int_{\R^n} G \! \left( \frac{v_\varepsilon(x') - v_\varepsilon(y')}{|x' - y'|} \right) \frac{dy'}{|x' - y'|^{n + \alpha}} \\
& \quad + 2 \int_{\R^n} \left\{ G(+\infty) - G \! \left( \frac{v_\varepsilon(x') + v_\varepsilon(y')}{|x' - y'|} \right) \right\} \frac{dy'}{|x' - y'|^{n + \alpha}},
\end{align*}
with
$$
G(t) = \int_0^t \frac{d\tau}{(1 + \tau^2)^{\frac{n + 1 + \alpha}{2}}} \quad \mbox{for } t \in \R.
$$
Observe that~$G$ is an odd, increasing functions that satisfies~$G(0) = 0$ and
\begin{equation} \label{G'le1}
|G(t_1) - G(t_2)| \le |t_1 - t_2| \quad \mbox{for every } t_1, t_2 \in \R.
\end{equation}

To estimate the integrals above, we distinguish between the two cases~$y' \in B'_\varepsilon(x')$ and~$y' \in \R^n \setminus B'_\varepsilon(x')$. In the former case, thanks to~\eqref{G'le1} and~\eqref{vepsreg} we have
\begin{equation} \label{insidetech1}
\begin{aligned}
& \left| \PV \int_{B_\varepsilon(x')} G \! \left( \frac{v_\varepsilon(x') - v_\varepsilon(y')}{|x' - y'|} \right) \frac{dy'}{|x' - y'|^{n + \alpha}} \right| \\
& \hspace{30pt} = \left| \int_{B_\varepsilon(x')} \left\{ G \! \left( \frac{v_\varepsilon(x') - v_\varepsilon(y')}{|x' - y'|} \right) - G \! \left( \frac{\nabla v_\varepsilon(x') \cdot (x' - y')}{|x' - y'|} \right) \right\} \frac{dy'}{|x' - y'|^{n + \alpha}} \right| \\
& \hspace{30pt} \le  \int_{B_\varepsilon(x')} \frac{\left| v_\varepsilon(x') - v_\varepsilon(y') - \nabla v_\varepsilon(x') \cdot (x' - y')\right|}{|x' - y'|^{n + 1 + \alpha}} \, dy' \le C \varepsilon \int_{B_\varepsilon} \frac{dz'}{|z'|^{n - 1 + \alpha}} \\
& \hspace{30pt} \le C \varepsilon^{2 - \alpha}.
\end{aligned}
\end{equation}
Also, from~\eqref{vepsbounds} it easily follows that, for all~$y' \in B_\varepsilon(x')$,
\begin{align*}
G(+\infty) - G \! \left( \frac{v_\varepsilon(x') + v_\varepsilon(y')}{|x' - y'|} \right) & = \int_{\frac{v_\varepsilon(x') + v_\varepsilon(y')}{|x' - y'|}}^{+\infty} \frac{d\tau}{(1 + \tau^2)^{\frac{n + 1 + \alpha}{2}}} \\
& \ge 2^{- \frac{n + 1 + \alpha}{2}} \int_{\frac{10 \varepsilon}{|x' - y'|}}^{+\infty} \frac{d\tau}{\tau^{n + 1 + \alpha}} \ge \frac{C^{-1}}{\varepsilon^{n + \alpha}} |x' - y'|^{n + \alpha},
\end{align*}
and thus
\begin{equation} \label{insidetech2}
\int_{B'_\varepsilon(x')} \left\{ G(+\infty) - G \! \left( \frac{v_\varepsilon(x') + v_\varepsilon(y')}{|x' - y'|} \right) \right\} \frac{dy'}{|x' - y'|^{n + \alpha}} \ge \frac{C^{-1}}{\varepsilon^\alpha}.
\end{equation}
On the other hand, using again~\eqref{vepsreg},~\eqref{G'le1}, and the monotonicity of~$G$, we find that 
\begin{align*}
& \int_{\R^n \setminus B_\varepsilon(x')} \left\{ G \! \left( \frac{v_\varepsilon(x') - v_\varepsilon(y')}{|x' - y'|} \right) + G(+\infty) - G \! \left( \frac{v_\varepsilon(x') + v_\varepsilon(y')}{|x' - y'|} \right) \right\} \frac{dy'}{|x' - y'|^{n + \alpha}} \\
& \hspace{40pt} \ge \int_{\R^n \setminus B_\varepsilon(x')} \frac{- G \! \left( \| \nabla v_\varepsilon \|_{L^\infty(\R^n)} \right) + 0}{|x' - y'|^{n + \alpha}} \, dy' \ge - G \! \left( C \varepsilon \right) \int_{\R^n \setminus B_\varepsilon} \frac{dz'}{|z'|^{n + \alpha}} \ge - C \varepsilon^{1 - \alpha}.
\end{align*}

By putting together the last estimate with~\eqref{insidetech1} and~\eqref{insidetech2}, we conclude that
$$
H_\alpha[F_\varepsilon](x) \ge \frac{2}{\varepsilon^{\alpha}} \left( C^{-1} - C \varepsilon - C \varepsilon^2 \right) \quad \mbox{for every } x \in \partial F_\varepsilon \cap B_4.
$$
In particular, there exists~$\varepsilon_2 \in (0, 1)$ such that
$$
H_\alpha[F_\varepsilon](x)  > 0 \quad \mbox{for every } x \in \partial F_\varepsilon \cap B_4,
$$
provided~$\varepsilon \in (0, \varepsilon_2]$. The combination of this and~\eqref{claiminthelarge} gives~\eqref{HFeps>0}.
\end{proof}

With the result of Proposition~\ref{barrierprop} in hand, we are now ready to proceed with the proof of Theorem~\ref{mainforcatenoids}.

\begin{proof}[Proof of Theorem \ref{mainforcatenoids}]
We argue by contradiction and suppose that such an~$\alpha$-stationary set exists. Thanks to the properties of~$\varphi$ and hypothesis~\eqref{Ecatenoid}, for every~$\delta > 0$, there exists a constant~$C_\delta > 0$ such that
\begin{equation} \label{Esubeqlin}
E \subseteq \Big\{ x \in \R^{n + 1} : |x_{n + 1}| < C_\delta + \delta |x'| \Big\},
\end{equation}

Let now~$\varepsilon_0 > 0$ be the parameter found in Proposition~\ref{barrierprop} and choose~$\delta := \varepsilon_0 / 8$. We then consider the rescaled set~$E_\star := \lambda E$, with~$\lambda :=\varepsilon_0 / (8 C_{\varepsilon_0/8} )$. Of course,~$E_\star$ is~$\alpha$-stationary. Moreover, thanks to~\eqref{Ecatenoid} and~\eqref{Esubeqlin}, it satisfies
\begin{equation} \label{E*sublin}
E_\star \subseteq \left\{ y \in \R^{n + 1} : |y_{n + 1}| < \lambda \, \varphi \! \left( \frac{|y'|}{\lambda} \right) \right\} \cap \bigg\{ y \in \R^{n + 1} : |y_{n + 1}| < \frac{\varepsilon_0}{8} \left( 1 + |y'| \right) \bigg\}.
\end{equation}
Let~$F_\varepsilon$ and~$v_\varepsilon$ be as in Proposition~\ref{barrierprop}. Note that~$v_\varepsilon(x') \ge \frac{\varepsilon}{4} ( 1 + |x'| )$ for all~$x' \in \R^n$. Accordingly,~\eqref{E*sublin} gives that
\begin{equation} \label{1ststep}
E_\star \subseteq F_{\varepsilon_0/2}.
\end{equation}

Define now
$$
\varepsilon_\star := \inf \big\{ \varepsilon > 0 : E_\star \subseteq F_\varepsilon \big\}.
$$
We clearly have that~$\varepsilon_\star \le \varepsilon_0 / 2$, thanks to~\eqref{1ststep}. We claim that, in fact,
\begin{equation} \label{epsdagger=0}
\varepsilon_\star = 0.
\end{equation}
To check this, we argue by contradiction and suppose that~$\varepsilon_\star \in (0, \varepsilon_0 / 2]$. Of course,
\begin{equation} \label{EstarinFdagger}
E_\star \subseteq F_{\varepsilon} \quad \mbox{for all } \varepsilon \ge \varepsilon_\star
\end{equation}
and
\begin{equation} \label{EstarnotinFdagger}
E_\star \not\subseteq F_{\varepsilon} \quad \mbox{for all } \varepsilon \in (0,  \varepsilon_\star).
\end{equation}
In consequence of~\eqref{EstarnotinFdagger}, there exists a sequence of points~$\{x^{(j)}\}$ such that~$x^{(j)} \in E_\star \setminus F_{\varepsilon_\star - 1/j}$ for every large~$j \in \N$.

We claim that
\begin{equation} \label{xj'bounded}
\{ x^{(j)} \} \mbox{ is a bounded sequence}.
\end{equation}
If not, then by~\eqref{E*sublin},~\eqref{Fepsdef}, and~\eqref{vepsbounds} we would get that
$$
\lambda \, \varphi \! \left( \frac{|(x^{(j)})'|}{\lambda} \right) \ge |x^{(j)}_{n + 1}| \ge v_{\varepsilon_\star - \frac{1}{j}}((x^{(j)})') = \left( \varepsilon_\star - \frac{1}{j} \right) |(x^{(j)})'|,
$$
for infinitely many~$j$. Setting~$r_j := |(x^{(j)})'| / \lambda$, this yields that
$$
\limsup_{r \rightarrow +\infty} \frac{\varphi(r)}{r} \ge \limsup_{j \rightarrow +\infty}\frac{\varphi(r_j)}{r_j} \ge \varepsilon_\star > 0, 
$$
in contradiction with~\eqref{phisublin}. As a result,~\eqref{xj'bounded} holds true.

We infer from~\eqref{xj'bounded} that, up to a subsequence,~$\{ x^{(j)} \}$ converges to some~$x_\star \in \R^{n + 1}$. Necessarily,~$x_\star \in \partial F_{\varepsilon_\star} \cap \partial E_\star$. Since~$E_\star \subseteq F_{\varepsilon_\star}$ (by~\eqref{EstarinFdagger}),~$\partial F_{\varepsilon_\star}$ is smooth, and~$E_\star$ is~$\alpha$-stationary, we conclude that
$$
H_\alpha[F_{\varepsilon_\star}](x_\star) \le 0,
$$
in contradiction with~\eqref{HFeps>0}. Hence,~\eqref{epsdagger=0} must be true and, therefore,~$E_\star \subseteq F_\varepsilon$ for all~$\varepsilon > 0$. That is,~$E_\star \subseteq \left\{ x \in \R^{n + 1} : x_{n + 1} = 0 \right\}$, which is clearly impossible. The conclusion of
Theorem~\ref{mainforcatenoids}
is thus valid.
\end{proof}

\subsection{Graph-type~$\alpha$-stationary surfaces}

In this subsection, we prove Theorem~\ref{mainthm} under assumption~\eqref{Egraph}, as specified by the next result.

\begin{theorem} \label{mainforgraphs}
If~$E$ is an~$\alpha$-stationary set in~$\R^{n + 1}$ such that~\eqref{Egraph} holds true for some continuous function~$\varphi: [0, +\infty] \to (0, +\infty)$ satisfying~\eqref{phisublin}, then~$E$ is a half-space.
\end{theorem}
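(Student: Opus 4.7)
The strategy is to first show that the hypothesis~\eqref{Egraph}, combined with~\eqref{phisublin}, forces~$E$ to be the subgraph of a function~$u : \R^n \to \R$ satisfying~$|u| \leq \varphi$, and then to invoke a one-sided nonlocal Bernstein-type rigidity result such as~\cite[Theorem~1.3]{CFL18}, concluding that~$\partial E$ is a hyperplane. The sublinearity~\eqref{phisublin} will then force this hyperplane to be horizontal, yielding~$E = \{x_{n + 1} < c\}$ for some~$c \in \R$.

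For the subgraph property, I would employ a sliding argument with vertical translations. Set~$E_t := E + t \, e_{n + 1}$ for~$t \geq 0$; each~$E_t$ is~$\alpha$-stationary by translation invariance of the nonlocal perimeter. The goal is to prove~$E \subseteq E_t$ for every~$t \geq 0$, which is equivalent to~$E$ being closed under downward vertical shifts and hence a subgraph. To initialize the sliding one needs~$E \subseteq E_t$ for large~$t$: this is immediate from~\eqref{Egraph} with~$t > 2 \|\varphi\|_{L^\infty}$ when~$\varphi$ is bounded, and in general it follows from a rescaling analogous to the one in the proof of Theorem~\ref{mainforcatenoids}, exploiting that~$\lambda \varphi(r/\lambda) \to 0$ uniformly on compact sets of~$r$ as~$\lambda \to 0^+$, by~\eqref{phisublin}. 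Having set~$t^* := \inf \{t \geq 0 : E \subseteq E_t\} < +\infty$, I would argue by contradiction: if~$t^* > 0$, continuity of the inclusion in~$t$ yields~$E \subseteq E_{t^*}$ and the minimality of~$t^*$ produces a touching point~$x_* \in \partial E \cap \partial E_{t^*}$; the strong maximum principle for~$\alpha$-stationary sets then forces~$E = E_{t^*}$. This is absurd: a set containing~$\{x_{n + 1} < -\varphi(|x'|)\}$ and invariant under a nontrivial vertical translation~$t^* e_{n + 1}$ would, by iterating the translation upwards, cover all of~$\R^{n + 1}$, contradicting the upper inclusion~$E \subseteq \{x_{n + 1} < \varphi(|x'|)\}$. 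Hence~$t^* = 0$ and~$E = \{x_{n + 1} < u(x')\}$ with~$|u(x')| \leq \varphi(|x'|)$.

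With the subgraph structure in place, the upper inclusion in~\eqref{Egraph} becomes a one-sided linear bound on~$\partial E$ (since sublinear is stronger than linear), and~\cite[Theorem~1.3]{CFL18} applies to give that~$E$ is a half-space~$\{x \cdot \xi < c\}$ for some~$\xi \in S^n$ and~$c \in \R$. If~$\xi$ had any nonzero horizontal component, the corresponding boundary hyperplane would grow linearly in some horizontal direction, contradicting the sublinear bound~$|u(x')| = o(|x'|)$; hence~$\xi = e_{n + 1}$ and~$E = \{x_{n + 1} < c\}$, as claimed.

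The main obstacle lies in the sliding step, specifically in the rigorous invocation of the strong maximum principle at the touching point~$x_*$: since~$E$ is only measurable and~$\alpha$-stationary in the viscosity sense made precise in the introduction, one must carefully approximate~$\partial E_{t^*}$ by smooth sets on both sides of the touching point and match the nonlocal mean curvatures using the ``touching from the interior'' and ``touching from the exterior'' clauses of the definition. A secondary technical difficulty is the initialization of the sliding when~$\varphi$ is unbounded, which is handled via the rescaling trick already exploited in the catenoid-type case.
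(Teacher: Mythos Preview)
Your approach differs substantially from the paper's. Rather than first proving a subgraph property via sliding and then citing~\cite{CFL18}, the paper performs a blow-down: the rescaled sets~$E_R := E/R$ satisfy, by~\eqref{phisublin}, the~$\varepsilon$-flatness hypothesis of the improvement-of-flatness theorem~\cite[Theorem~6.1]{CRS10} in~$B_1$ for all large~$R$; that theorem (valid for viscosity~$\alpha$-stationary sets, as observed in~\cite{CCS17}) then gives that~$\partial E_R \cap B_{1/2}$ is the graph of a function~$u_R$ with a uniform~$C^{1,\beta}$ bound, and scaling back yields~$[\nabla u]_{C^\beta(B'_{R/4})} \le C R^{-\beta} \to 0$, so~$\nabla u$ is constant. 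This produces the graph structure and the flatness in one stroke, with no sliding and no external Bernstein-type result.

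Your sliding argument has a genuine gap at the initialization step when~$\varphi$ is unbounded. The rescaling you propose does not reduce to the bounded case: setting~$\psi_\lambda(r) := \lambda\, \varphi(r/\lambda)$ one has~$\psi_\lambda \to 0$ on compacta as~$\lambda \to 0^+$, but~$\sup_{r \ge 0} \psi_\lambda(r) = +\infty$ whenever~$\varphi$ is unbounded (e.g.~$\varphi(r)=\sqrt{1+r}$), so for no finite~$t$ is the inclusion~$\lambda E \subseteq \lambda E + t\, e_{n+1}$ guaranteed a priori. Even granting an initial inclusion, the contact sequence~$x^{(j)} \in E \setminus E_{t^\ast - 1/j}$ need not be bounded: the constraints~$t^\ast - 1/j - \varphi(|(x^{(j)})'|) \le x^{(j)}_{n+1} < \varphi(|(x^{(j)})'|)$ put no restriction on~$|(x^{(j)})'|$, so a touching point may fail to exist---in the catenoid case this compactness came from comparing against a barrier with strictly faster (linear) growth than~$\varphi$, a mechanism absent when you slide~$E$ against a translate of itself. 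Finally, the strong comparison you invoke at~$x_\ast$ compares two sets neither of which is known to be~$C^2$ there, whereas the viscosity definition adopted in the paper only yields nonlocal-curvature information when one of the two touching sets is smooth near the contact point. The paper's blow-down route avoids all three issues by obtaining the~$C^{1,\beta}$ regularity directly from~\cite[Theorem~6.1]{CRS10}.
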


Theorem~\ref{mainforgraphs} is a consequence of a blow-down argument analogous to the one presented for instance in~\cite[Lemma~3.1]{FV17}. The technical
details go as follows.

\begin{proof}[Proof of Theorem~\ref{mainforgraphs}]
Up to a vertical translation, we can suppose that~$0 \in \partial E$. Notice that condition~\eqref{Egraph} continues to hold, for a possibly different function~$\varphi$ still fulfilling~\eqref{phisublin}.

We consider the rescaled set
\begin{equation} \label{ERdef}
E_R := \frac{E}{R} = \Big\{ y \in \R^{n + 1} : R y \in E \Big\},
\end{equation}
for~$R > 0$ large. Notice that~$E_R$ is~$\alpha$-stationary,~$0 \in \partial E_R$, and, by~\eqref{Egraph},
$$
\left\{ y \in \R^{n + 1} : y_{n + 1} < - \frac{\varphi(R |y'|)}{R} \right\} \subseteq E_R \subseteq \left\{ y \in \R^{n + 1} : y_{n + 1} < \frac{\varphi(R |y'|)}{R} \right\}.
$$
Note that, under our assumptions on~$\varphi$, for every~$\varepsilon > 0$ there exists a constant~$C_\varepsilon > 0$ for which we have~$\varphi(t) \le C_\varepsilon + (\varepsilon/2) t$ for all~$t \ge 0$. Consequently, for every~$\varepsilon \in (0, 1/4)$ there exists~$R_\varepsilon > 0$ such that
\begin{equation} \label{epsflat}
\big\{ y \in B_1 : y_{n + 1} < - \varepsilon \big\} \subseteq E_R \cap B_1 \subseteq \big\{ y \in B_1 : y_{n + 1} < \varepsilon \big\}
\end{equation}
holds true for all~$R \ge R_\varepsilon$.

Let now~$\varepsilon$ be lower or equal to the parameter~$\varepsilon_0$ of~\cite[Theorem~6.1]{CRS10}. We remark that such result, stated in~\cite{CRS10} for minimizers of the~$\alpha$-perimeter, is also valid for~$\alpha$-stationary sets satisfying \eqref{epsflat}, as revealed by a careful inspection of its proof---this fact has also been observed in~\cite{CCS17}. Therefore, we can employ~\cite[Theorem~6.1]{CRS10} in our setting and deduce that, for~$R$ sufficiently large,~$\partial E_R$ can be written inside the ball~$B_{1/2}$ as the graph of a function~$u_R: B_{1/2}' \to \R$ satisfying~$u_R(0) = 0$ and~$\| u_R \|_{C^{1, \beta}(B_{1/2}')} \le C$, for some constants~$\beta \in (0, 1)$ and~$C > 0$ depending only on~$n$ and~$\alpha$.

By definition~\eqref{ERdef}, we conclude that~$\partial E$ is the graph of the function~$u: \R^n \to \R$ satisfying~$u(x') = R \, u_R(x' / R)$ for all~$x' \in B_{R/4}'$. By this and the uniform bound on the~$C^{1, \beta}$ norm of~$u_R$, we find that
$$
[\nabla u]_{C^\beta(B'_{R'/4})} = \frac{[\nabla u_R]_{C^\beta(B'_{1/4})}}{R^\beta} \le \frac{C}{R^{\beta}} \quad \mbox{for all large } R.
$$
Hence,~$\nabla u$ is constant in~$\R^n$ and~$E$ is a half-space.
\end{proof}

\vfill

\end{document}